\newcommand{\no}[1]{ \left| \! \left| #1 \right| \! \right|_{L^2} }
\title[Unique continuation and the HK theorem]{Unique continuation for \\ many-body Schr\"odinger operators \\ and the Hohenberg-Kohn theorem{*}}
\author[L. Garrigue]{Louis Garrigue}
\address{CEREMADE, Universit\'e Paris-Dauphine, PSL Research University, F-75016 Paris, France} 
\email{garrigue@ceremade.dauphine.fr}\date{January 2019 \\ $\phantom {lla,}$*\,This is a corrected version of the published article \cite{Garrigue18}.}
\begin{document}

\begin{abstract} We prove the strong unique continuation property for many-body Schr\"odinger operators with an external potential and an interaction potential both in $L^p_{\rm{loc}}(\er{d})$, where $p > \max(2d/3,2)$, independently of the number of particles. With the same assumptions, we obtain the Hohenberg-Kohn theorem, which is one of the most fundamental results in Density Functional Theory. 
\end{abstract}
%\date{\today}

\maketitle

Density Functional Theory (DFT) is one of the most successful methods in quantum physics and chemistry to simulate matter at the microscopic scale \cite{DreGro90,CanDefKutLeBMad03,EngDre11}. It is a very active field of research, applied to very diverse physical situations, going from atoms and small molecules to condensed matter systems \cite{Jones15}.

One of the basis of DFT is due to Hohenberg and Kohn in 1964 \cite{HohKoh64}, who showed that in equilibrium, the knowledge of the ground state density alone is sufficient to characterize the system. In other words, all the information of a quantum system is contained in its ground state one-particle density. The Hohenberg-Kohn theorem was precised by Lieb in \cite{Lieb83b}, who emphasized that it relies on a unique continuation property (UCP) for the many-particle Hamiltonian.

A typical (strong) unique continuation result \cite{Tataru04} is that if a wavefunction $\p$ vanishes sufficiently fast at one point and solves Schr\"odinger's equation $H\p=0$, then $\p=0$.
Unique continuation properties began to be developped by Carleman in \cite{Carleman39} and, today, a broad range of results exists when the operator is $H = -\Delta + V(x)$, with $V$ in some $L^p_{\tx{loc}}$ space. A famous result of Jerison and Kenig \cite{JerKen85} covers the case $p=d/2$ in dimension $d$. It was later improved by Koch and Tataru in \cite{KocTat01}.

Unfortunately, these results are not well adapted to the situation of Schr\"odinger operators describing $N$ particles, which are defined on $\er{dN}$. In order to apply the existing results, one would need assumptions on the potentials depending on $N$. To the best of our knowledge, two works, due to Georgescu \cite{Georgescu80} and Schechter-Simon \cite{SchSim80}, provide a unique continuation property for many-particle Hamiltonians with an assumption on the potentials independent of $N$. However, they require the wavefunction to vanish on an open set (weak UCP), and for the Hohenberg-Kohn theorem strong UCP is needed.

Recently, Laestadius, Benedicks and Penz \cite{LaeBenPen17} have proved the first strong UCP result for many-body operators using ideas of Kurata \cite{Kurata97} and Regbaoui \cite{Regbaoui01}, but they need extra assumptions on the negative part of ${2V+ x \cdot \na V}$, which naturally appears in the Virial identity. In \cite{Zhou17}, Zhou used the result of Schechter and Simon to state a weak form of the Hohenberg-Kohn theorem, but this was already implicit in the work of Lieb \cite{Lieb83b}. We refer to \cite{Lieb83b,Kryachko05,CanDefKutLeBMad03,Levy10,Zhou12,Lammert18,PinBokLud07,EngEng83} for a discussion on the importance of the unique continuation principle for the Hohenberg-Kohn theorem.

In this article, we provide the first strong UCP for many-body operators in $L^p$ spaces and deduce the first complete proof of the Hohenberg-Kohn theorem in these spaces. Our proof mainly uses the method of Georgescu \cite{Georgescu80}, together with a Carleman estimate proved in \cite{Garrigue19}. We also use ideas from Figueiredo-Gossez \cite{FigGos92} to pass from the vanishing of $\p$ on a set of positive measure to the vanishing to infinite order at one point. In short, we can handle any number $N$ of particles living in $\er{d}$, with potentials in $L^p_{\rm{loc}}(\er{d})$ with $p > \max(2d/3,2)$.
We deduce the Hohenberg-Kohn theorem with similar assumptions.

\subsection*{Acknowledgement}
I warmly thank Mathieu Lewin, my PhD advisor, for having supervised me during this work. This project has received funding from the European Research Council (ERC) under the European Union's Horizon 2020 research and innovation programme (grant agreement MDFT No 725528).

\section{Main results}
\subsection{Strong unique continuation property}
We denote by $B_R$ the ball of radius $R$ centered at the origin. 
Our main result is the following.

\begin{theorem}[Strong UCP]\label{mainthm}
Let $V \in L^2_{\rm{loc}}(\er{n})$ such that for some $\delta > 0$ and for every ${R >0}$, there exists $c_R \geq 0$ such that for any $u \in H^2(\er{n})$,
\begin{equation}\label{main}
	\int_{B_R} \abs{V}^2 \abs{u}^2 \leq \ep_{\delta,n} \int_{\er{n}} \abs{(-\Delta)^{\frac{3}{4} - \delta} u}^2 + c_R \int_{\er{n}} \abs{u}^2,
\end{equation}
	where $\ep_{\delta,n}$ is a constant depending only on $\delta$ and on the dimension $n$. Let $\p \in H_{\rm{loc}}^2(\er{n})$ be a solution to $-\Delta \p + V \p = 0$. If $\p$ vanishes on a set of positive measure or if it vanishes to infinite order at a point, then ${\p=0}$.
\end{theorem}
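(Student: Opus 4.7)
The plan is to split the theorem into two parts, corresponding to its two alternative hypotheses, and to reduce the first (vanishing on a set of positive measure) to the second (vanishing to infinite order at a point). The technical engine for the second part is the weighted Carleman estimate proved in \cite{Garrigue19}, whose functional setting has been chosen precisely so that the absorption inequality (\ref{main}) fits exactly.

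For the infinite-order case, I would translate the vanishing point to the origin and apply, following Georgescu \cite{Georgescu80}, a Carleman estimate of the schematic form
\[
\tau \int_{\er{n}} |x|^{-2\tau}\bigl|(-\Delta)^{3/4-\delta}u\bigr|^2 \;\leq\; C\int_{\er{n}} |x|^{-2\tau}|\Delta u|^2,
\]
valid for $u$ supported in a small punctured ball and $\tau$ large, to $u=\chi\p$ with $\chi$ a radial cutoff localized near $0$. Using $\Delta\p=V\p$ together with a commutator estimate between $|x|^{-\tau}$ and $(-\Delta)^{3/4-\delta}$, the hypothesis (\ref{main}) applied to $|x|^{-\tau}\chi\p$ allows the potential term on the right to be swallowed into the left, thanks to the smallness of $\ep_{\delta,n}$. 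The leftover cutoff-commutator contributions are supported on $\mathrm{supp}\,\na\chi$, hence carry a uniformly bounded weight $|x|^{-2\tau}$. Letting $\tau\to\infty$ and using the infinite-order vanishing at $0$ forces $\p$ to vanish identically in a neighborhood of $0$. The set $Z$ of points at which $\p$ vanishes to infinite order is closed by definition and, by what we just proved, open, so by connectedness of $\er{n}$ it equals the whole space and $\p\equiv 0$.

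For the positive-measure case I would follow \cite{FigGos92}. Pick a Lebesgue density point $x_0$ of $\{\p=0\}$, so that for every $\alpha\in(0,1)$ the density of $\{\p=0\}$ inside $B_r(x_0)$ exceeds $\alpha$ for $r$ small. A Poincar\'e-type inequality for functions vanishing on a set of positive density yields $\|\p\|_{L^2(B_r(x_0))} \lesssim r\|\na\p\|_{L^2(B_r(x_0))}$, and iterating this with Caccioppoli and $H^2$ elliptic estimates coming from $-\Delta\p=-V\p$ (whose right-hand side lies in $L^2_{\rm{loc}}$ by (\ref{main})) produces $\|\p\|_{L^2(B_r(x_0))} \leq C_k r^k$ for every $k$. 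This is the required infinite-order vanishing, and one falls back into the previous case.

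The main obstacle is the commutator $[(-\Delta)^{3/4-\delta},\,|x|^{-\tau}]$ appearing in the Carleman step: unlike for integer-order derivatives, the fractional Laplacian admits no Leibniz rule, so the commutator must be shown to be of strictly lower order in $\tau$ than the principal term via pseudodifferential symbol calculus or precise pointwise kernel estimates. This is the technical core that I would import from \cite{Garrigue19} as a black box. A secondary difficulty is making the Figueiredo--Gossez iteration sufficiently quantitative that the constants $C_k$ do not grow faster than $r^{-k}$ along the induction.
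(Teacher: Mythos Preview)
Your overall plan matches the paper's: reduce positive-measure vanishing to infinite-order vanishing via Figueiredo--Gossez, then run a Carleman argument with a singular weight near the vanishing point and propagate by connectedness. The Figueiredo--Gossez reduction and your Poincar\'e/Caccioppoli sketch are exactly Step~1 of the paper.

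The substantive divergence is in the Carleman step, and here you have manufactured a difficulty the paper avoids entirely. You write the Carleman inequality with the weight \emph{outside} the fractional Laplacian,
\[
\tau \int |x|^{-2\tau}\bigl|(-\Delta)^{3/4-\delta}u\bigr|^2 \le C\int |x|^{-2\tau}|\Delta u|^2,
\]
and then apply hypothesis~(\ref{main}) to $|x|^{-\tau}\chi\psi$, which produces $(-\Delta)^{3/4-\delta}\bigl(|x|^{-\tau}\chi\psi\bigr)$. The mismatch forces the commutator $[(-\Delta)^{3/4-\delta},|x|^{-\tau}]$, which you flag as the main obstacle. But the Carleman estimate actually proved in \cite{Garrigue19} and used in the paper already has the weight \emph{inside} the fractional operator:
\[
\tau^{3/2-2s}\bigl\|(-\Delta)^{(1-\xi)s}\bigl(e^{\tau\phi}u\bigr)\bigr\|_{L^2}\le \kappa_{\xi,n}\bigl\|e^{\tau\phi}\Delta u\bigr\|_{L^2}.
\]
With this placement, applying (\ref{main}) to $e^{\tau\phi}\eta\psi$ matches the Carleman left-hand side exactly and no commutator ever appears. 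So the ``technical core'' you planned to import as a black box is not a commutator lemma; it is precisely this form of the estimate. Note also that the weight is not the pure power $|x|^{-\tau}$ but $e^{\tau\phi}$ with $\phi(x)=-\ln|x|+(-\ln|x|)^{-1/2}$; the perturbation supplies the convexity the inequality needs, and the paper's footnote records that an earlier published choice of weight was defective.

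Two further ingredients the paper supplies that your sketch omits. First, the Carleman estimate is stated for test functions supported away from the origin, so to apply it to $\eta\psi$ one must know that $e^{\tau\phi}\eta\psi$, $e^{\tau\phi}\nabla(\eta\psi)$ and $e^{\tau\phi}\Delta(\eta\psi)$ are all in $L^2$ for every $\tau$; the paper spends a separate lemma (Step~2) showing that $\nabla\psi$ and $\Delta\psi$ also vanish to infinite order, which is what makes these weighted integrals finite. Second, the absorption requires \emph{two} instances of the Carleman estimate: $s=3/4$ to swallow the $\epsilon_{\delta,n}$-term, and $s=0$ to kill the residual $c_R\|e^{\tau\phi}\eta\psi\|_{L^2}$ via the gain $\tau^{-3/2}$. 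Your sketch only invokes the first, so the lower-order term $c_R$ from (\ref{main}) is left unaccounted for.
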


The constant $\ep_{\delta,n}$ depends on the best constant of the Carleman inequality \eqref{carl} which we are going to use later.
We recall that $\p$ vanishes to infinite order at $x_0 \in \er{n}$ when for all $k \geq 1$, there is a $c_k$ such that
 \begin{equation*}
	 \int_{\abs{x-x_0} < \ep} \abs{\p}^2 \d x < c_k \ep^k,
 \end{equation*}
 for every $\ep <1$.

The assumption \eqref{main} can be rewritten in the sense of operators, in the form $$ \abs{V}^2 \indic_{B_R} \leq \ep_{\delta,n} (-\Delta)^{\f{3}{2} - 2\delta} +c_R. $$ This is satified if $V \in L^{p}_{\rm{loc}}(\er{n})$ with $p > \max(2n/3,2)$ (see the proof of Corollary \ref{th2}). However, the condition \eqref{main} has a better behavior with respect to the dimension than a condition in $L^p$ spaces. It is more appropriate to deal with $N$-body operators for which $n=dN$, as we will see. 
%\begin{comment} 
We denote by $B_R(x)$ the ball of radius $R$ and centered on $x \in \er{n}$. Assumption \eqref{main} is equivalent to saying that for any $x \in \er{n}$, there exists $c_x$ such that
	\beq \label{as2} \abs{V}^2 \indic_{B_1(x)} \leq \ep_{\delta,n}' (-\Delta)^{\f{3}{2}-2\delta} + c_x \qquad \tx{in $\er{n}$}. \eeq
	Indeed \eqref{as2} follows from \eqref{main} by taking $R = \abs{x} +1$ whereas the converse statement can be obtained by (fractional) localization, e.g. as in \cite[Lemma A.1]{LenLew10}.
%\end{comment}
We have stated our main result in $\R^n$ for simplicity, but there is a similar statement in a connected domain $\Omega$. One should then replace \eqref{main} by \eqref{as2} with small balls $B_R(x)\subset\Omega$. Our proof is really local in space.

Following Simon in \cite[section C.9]{Simon80}, we conjecture that Theorem \ref{mainthm} holds under the weaker condition $$\abs{V}\indic_{B_R} \leq  \ep_n (-\Delta) + c_R,$$ with $\p$ in $H^1_{\rm{loc}}(\er{n})$. 
A weak UCP was proved by Schechter and Simon in \cite{SchSim80} using estimates from Protter \cite{Protter60}, but with the stronger hypothesis $$\abs{V}^2 \indic_{B_R} \leq  \ep_n (-\Delta) +c_R.$$ Our Theorem \ref{mainthm} improves the weak UCP of Georgescu in \cite{Georgescu80}, which has an assumption similar to \eqref{main}. He used the estimate from Theorem 8.3.1 of \cite{Hormander63}, due to H\"ormander and we instead use a new Carleman estimate that we proved in \cite{Garrigue19}.

\subsection{Application to $N$-body operators}
We consider $N$ particles in $\er{d}$, submitted to an external potential $v$ and interacting with an even two-body potential $w$. The corresponding $N$-body Hamiltonian takes the form 
\begin{equation}\label{ham}
	H^N(v) = - \sum_{i=1}^N \Delta_{x_i} + \sum_{i=1}^{N} v(x_i) + \sum_{1 \leq i \sle j \leq N} w(x_i-x_j),
 \end{equation}
 on $L^2(\er{dN})$.
In order to ensure that the total potential
\begin{equation}\label{potintro}
V(x_1, \dots ,x_N) \df \sum_{i=1}^{N} v(x_i) + \sum_{1 \leq i \sle j \leq N} w(x_i-x_j),
\end{equation}
satisfies the assumption \eqref{main} in $\er{dN}$, it is sufficient that $v$ and $w$ satisfy \eqref{main} in $\er{d}$, but with an $\ep$ that can be taken as small as we want.

\begin{corollary}[UCP for many-body Schr\"odinger operators]\label{th2}
	Assume that the potentials satisfy 
\begin{equation} \label{borne}
	   \abs{v}^2 \indic_{B_R} + \abs{w}^2 \indic_{B_R} \leq \ep_{\delta,d,N} (-\Delta)^{\f{3}{2}-2\delta} + c_{R} \qquad \tx{ in $\er{d}$},
\end{equation}
	for some $\delta >0$ and for all $R>0$, where $\ep_{\delta,d,N}$ is a small constant depending only on $\delta$, $d$ and $N$. For instance ${v,w \in L^p_{\rm{loc}}(\er{d})}$ with $p> \max(2d/3)$.
	Let $\p \in H_{\rm{loc}}^2(\er{dN})$ be a solution to $H^N(v) \p = 0$. If $\p$ vanishes on a set of positive measure or if it vanishes to infinite order at a point, then ${\p=0}$.
\end{corollary}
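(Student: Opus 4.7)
The plan is to reduce Corollary~\ref{th2} directly to Theorem~\ref{mainthm} applied to the full $N$-body potential $V$ defined in~\eqref{potintro} on $\R^{dN}$. The whole content is to verify that the $\R^d$-bound~\eqref{borne} on $v$ and $w$ lifts to the analogous bound~\eqref{main} on $\R^{dN}$ for $V$, with the small constant $\ep_{\delta,n}$ required there, and then to check that $L^p_{\rm loc}(\R^d)$ with $p>\max(2d/3,2)$ implies~\eqref{borne}.

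First I would expand $|V|^2$ by Cauchy--Schwarz to reduce to bounding each $|v(x_i)|^2$ and each $|w(x_i-x_j)|^2$ on $\R^{dN}$, at the cost of a combinatorial prefactor depending only on $N$. For the one-body piece, since $\mathbf{1}_{B_R}(x)\le \mathbf{1}_{B_R^d}(x_i)$ in $\R^{dN}$, the hypothesis~\eqref{borne} in the single variable $x_i$ gives, as operators on $L^2(\R^{dN})$,
\begin{equation*}
|v(x_i)|^2\,\mathbf{1}_{B_R} \;\le\; \ep\,(-\Delta_{x_i})^{\frac32-2\delta} + c_R.
\end{equation*}
Since $-\Delta_{x_i}$ and $-\Delta_{\R^{dN}}$ commute and $-\Delta_{x_i}\le -\Delta_{\R^{dN}}$, the joint spectral theorem yields $(-\Delta_{x_i})^{s}\le (-\Delta_{\R^{dN}})^{s}$ for every $s\ge 0$, so this piece is controlled by $\ep\,(-\Delta_{\R^{dN}})^{\frac32-2\delta}+c_R$.

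For the two-body piece I would perform the linear change of variables $y=x_i-x_j$, $z=x_i+x_j$, keeping the other $x_k$ untouched. A direct chain-rule computation gives $-\Delta_{x_i}-\Delta_{x_j}=-2\Delta_y-2\Delta_z$, so that $-\Delta_y\le \tfrac12(-\Delta_{\R^{dN}})$ and all these operators commute. Using $\mathbf{1}_{B_R}(x)\le \mathbf{1}_{B_{2R}^d}(y)$ and applying~\eqref{borne} for $w$ in the variable $y$, I obtain
\begin{equation*}
|w(x_i-x_j)|^2\,\mathbf{1}_{B_R} \;\le\; \ep\,(-\Delta_y)^{\frac32-2\delta}+c_{2R} \;\le\; 2^{2\delta-\frac32}\,\ep\,(-\Delta_{\R^{dN}})^{\frac32-2\delta}+c_{2R}.
\end{equation*}
Summing the $N$ one-body and $\binom{N}{2}$ two-body contributions and absorbing the $N$-dependent prefactors by choosing $\ep_{\delta,d,N}$ small enough in~\eqref{borne}, the total potential $V$ satisfies~\eqref{main} in $\R^{dN}$ with the constant $\ep_{\delta,dN}$ required by Theorem~\ref{mainthm}. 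That theorem then gives $\p=0$ under either vanishing hypothesis.

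Finally I would verify the $L^p$ statement: for $v\in L^p_{\rm loc}(\R^d)$ with $p>\max(2d/3,2)$, H\"older and the Sobolev embedding $H^{\frac32-2\delta}(\R^d)\hookrightarrow L^q(\R^d)$ with $\tfrac1q=\tfrac12-\tfrac{3/2-2\delta}{d}$ (choosing $\delta$ small so that $2p/(p-2)\le q$) give $\||v|\,u\|_{L^2(B_R)}\le \|v\|_{L^p(B_R)}\|u\|_{L^{2p/(p-2)}}$, hence~\eqref{borne}. The main conceptual point---really the only non-bookkeeping step---is the commuting-operators argument that makes the fractional Laplacian monotone under $-\Delta_{x_i}\le -\Delta_{\R^{dN}}$ and $-\Delta_y\le \tfrac12(-\Delta_{\R^{dN}})$ despite the exponent exceeding $1$; everything else is Cauchy--Schwarz, an affine change of coordinates, and a choice of $\ep_{\delta,d,N}$ small enough in $N$.
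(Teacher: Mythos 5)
Your reduction to Theorem~\ref{mainthm} follows the same overall strategy as the paper's proof, and the core of it is correct. Both proofs expand $|V|^2$ by Cauchy--Schwarz, push~\eqref{borne} from $\R^d$ into $\R^{dN}$, and absorb $N$-dependent prefactors into $\ep_{\delta,d,N}$. Where you differ is in two places, neither of which is a problem. First, for the two-body term the paper simply views $w(x_i-x_j)$ as a function of $x_i$ and uses translation invariance of $-\Delta_{x_i}$, obtaining directly $|w(x_i-x_j)|^2\mathbf{1}_{B_R}\le\ep(-\Delta_{x_i})^{\frac32-2\delta}+c_{2R}$; your change of variables $y=x_i-x_j$, $z=x_i+x_j$ achieves the same thing at the cost of an extra $2^{2\delta-3/2}$ which is harmless, but the translation argument is a bit shorter. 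Second, the paper replaces $\sum_i(-\Delta_{x_i})^{\frac32-2\delta}$ by $(-\Delta)^{\frac32-2\delta}$ using the Fourier inequality $\sum_i|k_i|^{3-4\delta}\le\bigl(\sum_i|k_i|^2\bigr)^{\frac32-2\delta}$, whereas you use the commuting-operator observation $(-\Delta_{x_i})^s\le(-\Delta)^s$ term by term; this loses an extra factor $N$, which again only affects the choice of $\ep_{\delta,d,N}$. Your emphasis that commutativity (not operator monotonicity of $t\mapsto t^s$, which fails for $s>1$) is what makes the comparison legal is correct, and of course here everything is explicit since all the operators are Fourier multipliers.

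The one place where your sketch is genuinely incomplete is the final $L^p$ verification. Inequality~\eqref{borne} is not merely ``some operator bound holds'': the constant in front of $(-\Delta)^{\frac32-2\delta}$ must be the specific small number $\ep_{\delta,d,N}$ determined by the Carleman constant. H\"older plus Sobolev alone gives a bound with constant proportional to $\|v\|_{L^p(B_R)}^2$, which need not be small. To make the constant arbitrarily small one must split $v=v\mathbf{1}_{|v|>M}+v\mathbf{1}_{|v|\le M}$ and use dominated convergence, $\|v\mathbf{1}_{|v|>M}\|_{L^p(B_R)}\to0$ as $M\to\infty$, absorbing the bounded part into $c_R$; this is the argument in the paper and it is needed. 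Relatedly, your Sobolev exponent $\frac1q=\frac12-\frac{3/2-2\delta}{d}$ is negative for $d\in\{1,2\}$, so for low dimensions one should instead use $H^{3/2-2\delta}(\R^d)\hookrightarrow L^\infty(\R^d)$ (valid since $3/2-2\delta>d/2$) and the hypothesis $v,w\in L^2_{\mathrm{loc}}$, as the paper does.
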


\subsection{Hohenberg-Kohn theorem}
The one-particle density of a wavefunction $\p$ is defined as
\begin{equation*}
	\ro_{\p}(x) \df \sum_{i=1}^N \int \abs{\p(x_1,...,x_{i-1},x,x_{i+1},...,x_N)}^2 \d x_1 \cdots \d x_{i-1} \d x_{i+1} \cdots \d x_N.
 \end{equation*}
From Corollary \ref{th2}, we can deduce the following version of the Hohenberg-Kohn theorem.

\begin{theorem}[Hohenberg-Kohn]\label{hkthm}
	Let $w, v_1, v_2 \in (L^{p}+L^{\ii})(\er{d},\reals)$, with $p > \max(2d/3)$. If there are two normalised eigenfunctions $\p_1$ and $\p_2$ of $H^N(v_1)$ and $H^N(v_2)$, corresponding to the first eigenvalues, and such that $\ro_{\p_1} = \ro_{\p_2}$, then there exists a constant $c$ such that $v_1 = v_2 + c$. 
\end{theorem}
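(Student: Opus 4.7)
The plan is to combine Lieb's classical variational argument with the strong unique continuation property established in Corollary~\ref{th2}. Let $E_i$ denote the ground-state eigenvalue of $H^N(v_i)$, so that $\ro_{\p_1} = \ro_{\p_2}$ by assumption. Using $\p_2$ as a trial function for $H^N(v_1)$, and conversely $\p_1$ for $H^N(v_2)$, and noting that the operator difference $H^N(v_1)-H^N(v_2)$ is multiplication by $\sum_i (v_1-v_2)(x_i)$, we obtain the twin bounds
\begin{equation*}
E_1 \leq \langle \p_2, H^N(v_1)\p_2\rangle = E_2 + \int_{\er{d}} (v_1-v_2)\,\ro_{\p_2}, \qquad E_2 \leq E_1 + \int_{\er{d}} (v_2-v_1)\,\ro_{\p_1}.
\end{equation*}
Since the densities coincide, adding these forces both inequalities to be equalities, so $\p_2$ is in fact a ground state of $H^N(v_1)$ (and symmetrically $\p_1$ is a ground state of $H^N(v_2)$). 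At this stage I would verify that the variational expressions are well-defined using $v_i,w \in L^p+L^\ii$ together with the fact that $\p_i$ belongs to the form domain of $H^N(v_i)$.

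Subtracting the eigenvalue equations $H^N(v_1)\p_2 = E_1\p_2$ and $H^N(v_2)\p_2 = E_2\p_2$ then gives the identity
\begin{equation*}
\left(\sum_{i=1}^N \bigl(v_1(x_i)-v_2(x_i)\bigr) - (E_1-E_2)\right)\p_2(x_1,\dots,x_N) = 0
\end{equation*}
in $L^1_{\rm loc}(\er{dN})$. The pivotal step is now to invoke Corollary~\ref{th2} applied to $\p_2$, viewed as a solution of $(-\Delta + V_2 - E_2)\p_2 = 0$ where $V_2$ is the total $N$-body potential associated to $(v_2,w)$: since $\p_2$ is nonzero, it cannot vanish on a set of positive measure. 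To justify this application I would first ensure $\p_2 \in H^2_{\rm loc}(\er{dN})$ by standard elliptic regularity applied to $-\Delta \p_2 = (E_2-V_2)\p_2$ with $v_2,w \in L^p+L^\ii$ and $p > \max(2d/3,2)$, and verify that $V_2-E_2$ satisfies the form bound \eqref{borne} required by the corollary, which is precisely the $L^p$ criterion already recorded there.

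Since $\{\p_2 = 0\}$ therefore has measure zero in $\er{dN}$, the above identity becomes the pointwise equation $\sum_{i=1}^N (v_1-v_2)(x_i) = E_1-E_2$ almost everywhere. Setting $f \df v_1-v_2$, Fubini's theorem furnishes a full-measure set of $(x_2,\dots,x_N) \in \er{d(N-1)}$ for which the identity holds for almost every $x_1 \in \er{d}$; this forces $f(x_1) = (E_1-E_2) - \sum_{i \geq 2} f(x_i)$ to be a constant for a.e.\ $x_1$, and hence $v_1 - v_2 = c$ almost everywhere for some $c \in \reals$, as claimed. The principal obstacle in this program, and the reason the full Hohenberg-Kohn theorem in this generality was not previously available, is precisely the appeal to strong UCP: a weak UCP such as that of Schechter-Simon or Georgescu cannot exclude a vanishing set of positive measure on which the key identity would be destroyed, while earlier strong UCP results required hypotheses on the potentials scaling with the particle number $N$, contrary to what Corollary~\ref{th2} permits.
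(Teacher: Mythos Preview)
Your proposal is correct and follows essentially the same route as the paper: the classical Hohenberg--Kohn--Lieb variational argument to show $\p_2$ is a ground state of $H^N(v_1)$, subtraction of the two eigenvalue equations, and then the appeal to Corollary~\ref{th2} to conclude that the multiplicative identity holds almost everywhere. Your concluding Fubini argument is a mild rephrasing of the paper's ``integrate over $x_2,\dots,x_N$ in a bounded domain,'' and your added remarks on $H^2_{\rm loc}$ regularity and form-domain issues are reasonable technical checks that the paper leaves implicit.
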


The exact same theorem is valid if we take spin into account and assume that $\p_1, \p_2$ are the first eigenfunctions of $H^N(v_1), H^N(v_2)$ in any subspace invariant by the two operators. In particular the theorem applies to bosons and fermions. Our result covers the physical case of Coulomb potentials as in \cite{Zhou12}. However, in this situation, eigenfunctions are real analytic on an open set of full measure, and the argument is much easier.

We recall the proof from \cite{HohKoh64,Lieb83b} for the convenience of the reader. % This version is due to Mathieu Lewin. NO ! Actually it's already in \cite[Theorem 1]{PinBokLud07}, cité par Zhou
\begin{proof}
	We denote by $\ro \df \ro_{\p_1} = \ro_{\p_2}$ the common density. Since $\p_1$ is the ground state for $v_1$, then 
	 \begin{equation*} 
		 E_1 \hspace{-0.02cm} \df\hspace{-0.03cm} \ps{\p_1,H^N(v_1) \p_1}\hspace{-0.03cm} \leq\hspace{-0.03cm} \ps{\p_2,H^N(v_1) \p_2}\hspace{-0.03cm} = \hspace{-0.03cm}\ps{\p_2, H^N(v_2)\p_2} +\hspace{-0.02cm} \int_{\er{d}} \ro (v_1 - v_2).
	 \end{equation*}

	We also have 
	 \begin{equation*}
		 E_2 \df \ps{\p_2,H^N(v_2) \p_2} \leq \ps{\p_1,H^N(v_1) \p_1} + \int_{\er{d}} \ro (v_2 - v_1). 
	 \end{equation*}
	 Hence $E_1-E_2= \int_{\er{d}} \ro (v_1 - v_2)$ and $\ps{\p_2,H^N(v_1) \p_2} = E_1$, so $\p_2$ is a ground state for $H^N(v_1)$, and $H^N(v_1) \p_2\hspace{-0.03cm} =\hspace{-0.03cm} E_1 \p_2$. % Par thm 11.8 du Lieb Loss, ou l'hypo de V est localement borné par dessus n'est pas utilisée dans le fait que c'est un ground state, mais seulement utilisée après dans la seconde partie
	 Together with ${H^N(v_2) \p_2\hspace{-0.03cm} =\hspace{-0.03cm} E_2 \p_2}$, this gives 
	 \begin{equation*}
		 \pa{E_1-E_2 + \sum_{i=1}^N (v_2-v_1)(x_i)} \p_2 = 0. 
	 \end{equation*}
	 Since, by Corollary \ref{th2}, the normalised function $\p_2$ cannot vanish on a set of positive measure, we get 
	 \begin{equation*}
		  E_1-E_2 + \sum_{i=1}^N (v_2-v_1)(x_i) = 0
	 \end{equation*}
almost everywhere. Integrating this relation over $x_2, \dots, x_N$ in a bounded domain we conclude, as wanted, that $v_1-v_2 = c$. Eventually, using again $E_1-E_2= \int_{\er{d}} \ro (v_1 - v_2)$ yields $c = (E_1-E_2)/N$. 
\end{proof}

The rest of the paper is devoted to the proof of our main results.

\section{Proof of Theorem \ref{mainthm}}

\subsection*{Step 1. Vanishing on a set of positive measure implies vanishing to infinite order at one point.}

We will need to pass from $\p$ vanishing on a set of positive measure, which is the needed hypothesis for the Hohenberg-Kohn theorem, to $\p$ vanishing to infinite order at a point, which is the usual hypothesis for strong unique continuation. We reformulate here Proposition 3 of \cite{FigGos92} with slightly weaker assumptions.

\begin{proposition}[Figueiredo-Gossez \cite{FigGos92}] \label{prop}
	Let $V \in L^1_{\rm{loc}} (\er{n})$ such that for every $R>0$, there exist $a_R<1$ and $c_R >0$ such that 
\begin{equation}\label{fig}
	\abs{V} \indic_{B_R} \leq a_R (-\Delta) +c_R.
 \end{equation}
If $\p \in H_{\rm{loc}}^1(\er{n})$ vanishes on a set of positive measure and if $-\Delta \p + V \p = 0$ weakly, then $\p$ has a zero of infinite order.
\end{proposition}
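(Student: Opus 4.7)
The plan is to combine Lebesgue's density theorem with a Caccioppoli-type energy estimate and the Sobolev embedding, then to iterate across dyadic scales. Since $\p$ vanishes on a set $Z$ of positive measure, almost every point of $Z$ is a density point, so after translation I may assume $0 \in Z$ is such a point. Writing $\mu(\ep) := |B_\ep \setminus Z|$ one then has $\mu(\ep)/|B_\ep| \to 0$, and with $f(\ep) := \int_{B_\ep}|\p|^2$ it suffices to show that $f$ decays faster than any polynomial at $0$.

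The first analytic step is to establish a Caccioppoli-type estimate on balls. Testing $-\Delta\p+V\p=0$ against $\eta^2\overline{\p}$ with a cutoff $\eta$ supported in $B_\ep$ gives $\int \eta^2|\nabla\p|^2 = -2\int \eta\overline{\p}\nabla\eta\cdot\nabla\p - \int V\eta^2|\p|^2$. Young handles the cross term, while the potential term is treated by applying the hypothesis \eqref{fig} to $\eta\p$: since $|V|\indic_{B_R}$ is form-bounded by $-\Delta$ with constant $a_R<1$, after expanding $|\nabla(\eta\p)|^2$ with a small Young parameter $\alpha$ one can absorb a fraction $a_R(1+\alpha)$ of $\int\eta^2|\nabla\p|^2$ back into the left-hand side. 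The strict inequality $a_R<1$ is precisely what allows this closure and yields $\int \eta^2|\nabla\p|^2 \leq C_R \int |\nabla\eta|^2|\p|^2 + C_R \int (\eta\p)^2$, with constants insensitive to the zero set of $\p$.

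The second step extracts smallness from the fact that $\eta\p$ vanishes on $A_\ep := Z\cap B_\ep$. For $n\geq 3$, H\"older and Sobolev give $\int (\eta\p)^2 = \int_{B_\ep\setminus A_\ep}(\eta\p)^2 \leq \mu(\ep)^{2/n}\|\eta\p\|_{L^{2n/(n-2)}}^2 \leq C\,\mu(\ep)^{2/n}\int|\nabla(\eta\p)|^2$; for $n\leq 2$ one uses $H^1\hookrightarrow L^q$ for large $q$, with the same structure. Choosing $\eta\equiv 1$ on $B_{\ep/2}$ with $|\nabla\eta|\leq C/\ep$, combining with the Caccioppoli bound, and absorbing the resulting $C\mu(\ep)^{2/n}\int(\eta\p)^2$ into the left side (valid for $\ep$ small since $\mu(\ep)^{2/n}\to 0$), I obtain $f(\ep/2) \leq C (\mu(\ep)/|B_\ep|)^{2/n} f(\ep) =: \delta(\ep)\,f(\ep)$ with $\delta(\ep)\to 0$ at the density point. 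Iterating along dyadic radii $\ep_k = \ep_0/2^k$ forces $f(\ep_k)$ to vanish faster than any geometric sequence, which gives $f(\ep)=o(\ep^k)$ for every $k$, i.e.\ infinite-order vanishing at $0$.

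The main obstacle is really the Caccioppoli absorption: the strict bound $a_R<1$ is essential because the coefficient of $\int\eta^2|\nabla\p|^2$ produced by \eqref{fig} after the Young expansion must stay strictly below $1$; weakening this hypothesis breaks the closure and no purely local argument of this type survives. The remaining ingredients, namely the Lebesgue density theorem, the Sobolev embedding, and the dyadic iteration, interact in a standard way, with only cosmetic adjustments of the Sobolev exponent in the low dimensions $n\leq 2$.
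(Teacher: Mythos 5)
Your argument is correct, and it is essentially the de Figueiredo--Gossez argument that the paper invokes without reproducing: the paper simply cites \cite{FigGos92} and notes that their proof of Proposition~3 runs unchanged under the form-bound \eqref{fig} in place of $V\in L^{n/2}_{\rm loc}$. Your Caccioppoli step uses $a_R<1$ for the absorption exactly as their proof does, your H\"older--Sobolev step supplies the measure-sensitive Poincar\'e inequality driving the smallness, and the dyadic iteration at a Lebesgue density point of the zero set is the standard closing move, so this is a faithful reconstruction rather than a different route.
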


The proof is written in \cite{FigGos92} under the assumption that $V \in L^{n/2}_{\rm{loc}}(\er{n})$ but after inspection, one realises that it only relies on \eqref{fig}. We remark that our assumption \eqref{main} is stronger than \eqref{fig}. This is because the square root is operator monotone, and therefore $$ \abs{V} \indic_{B_R} \leq \sqrt{ \ep_n (-\Delta)^{\f{3}{2} - 2\delta} +c_R } \leq \ep_n^{\f{2}{3}} (-\Delta) + c_R'.$$ For this reason, we will assume for the rest of the proof that $\p$ vanishes to infinite order at one point, which can be taken to be the origin without loss of generality.

\subsection*{Step 2. $\na \p$ and $\Delta \p$ vanish to infinite order as well.}

First we remark that if $\p \in L^2(\er{n})$, then vanishing to infinite order at the origin is equivalent to $\int_{B_1}\abs{x}^{-\tau} \abs{\p}^2 \d x$ being finite for every $\tau \geq 0$. Indeed, if $\p$ vanishes to infinite order at the origin, that is $\int_{B_{\ep}} \abs{\p}^2 \leq c_k \ep^k, $ then we get, after integrating over $\ep$, 
	\begin{align*}
		c_k \geq \int_0^1 \frac{\int_{B_{\ep}} \abs{\p}^2}{\ep^k} \d \ep & = \int_{B_1} \int_0^1 \abs{\p(x)}^2 \f{\indic_{\abs{x}\leq \ep}}{\ep^k}  \d \ep \, \d x \\
		& = \inv{k-1} \int_{B_1} \abs{\p(x)}^2 \pa{ \inv{\abs{x}^{k-1}} -1} \d x.
	\end{align*}
Conversely, if $\int_{\abs{x} \leq 1}\abs{x}^{-\tau} \abs{\p}^2$ is finite for every $\tau \geq 0$, then $$\ep^{-k} \int_{B_{\ep}} \abs{\p}^2 \leq \int_{B_{\ep}}\abs{x}^{-k} \abs{\p(x)}^2 \d x \leq \int_{B_1} \abs{x}^{-k}\abs{\p(x)}^2 \d x.$$ The finiteness of these integrals will play an important role later.

\begin{lemma}[Finiteness of weighted norms]\label{finite} \tx{ }

	i) Let $V \in L^1_{\rm{loc}} (\er{n})$ such that there exist $a<1$ and $c >0$ such that 
	\begin{equation*}
		\abs{V} \indic_{B_1} \leq a (-\Delta) +c.
	\end{equation*}
	Let $\p \in H^1_{\rm{loc}}(\er{n})$ satisfying $-\Delta \p + V \p=0$ weakly. If $\p$ vanishes to infinite order at the origin, then $\na \p$ as well.

	ii) 
	Let $V \in L^2_{\rm{loc}} (\er{n})$ such that there exist $a<1$ and $c >0$ such that 
	\begin{equation*}
		\abs{V}^2 \indic_{B_1} \leq a (-\Delta)^2 +c.
	\end{equation*}
	Let $\p \in H^2_{\rm{loc}}(\er{n})$ satisfying $-\Delta \p + V \p=0$. If $\p$ vanishes to infinite order at the origin, then $\na \p$ and $\Delta \p$ as well.
\end{lemma}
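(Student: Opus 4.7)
The plan is a Caccioppoli-type localization argument with a cutoff near the origin, in which the operator inequalities are used to absorb the potential term; the strict inequality $a<1$ is essential and must be preserved through every step. Let me first reduce (ii) to (i). The hypothesis $|V|^2\indic_{B_1} \le a(-\Delta)^2 + c$ implies, by operator monotonicity of the square root and the elementary scalar bound $\sqrt{x+y}\le\sqrt{x}+\sqrt{y}$, that $|V|\indic_{B_1} \le \sqrt{a}\,(-\Delta) + \sqrt{c}$ with $\sqrt{a} < 1$, which is exactly the hypothesis of (i). So once (i) is proved, the only remaining task in (ii) is to handle $\Delta\psi$.

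For (i), I fix $\epsilon \in (0, 1/2)$ and choose $\chi_\epsilon \in C^\infty_c(B_{2\epsilon})$ with $\chi_\epsilon \equiv 1$ on $B_\epsilon$ and $|\nabla\chi_\epsilon| \le C/\epsilon$. Testing the weak equation against $\chi_\epsilon^2\psi$ and taking real parts yields
\begin{equation*}
\int \chi_\epsilon^2 |\nabla\psi|^2 + 2\,\mathrm{Re}\!\int \chi_\epsilon \bar\psi\, \nabla\chi_\epsilon \cdot \nabla\psi + \int V \chi_\epsilon^2 |\psi|^2 = 0.
\end{equation*}
Adding $\int |\nabla\chi_\epsilon|^2 |\psi|^2$ to both sides gives the exact identity $\int |\nabla(\chi_\epsilon\psi)|^2 = -\int V \chi_\epsilon^2 |\psi|^2 + \int |\nabla\chi_\epsilon|^2 |\psi|^2$. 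Applying the hypothesis to the test vector $\chi_\epsilon\psi$, supported in $B_1$ for $\epsilon$ small, produces $\int |V|\chi_\epsilon^2|\psi|^2 \le a \int |\nabla(\chi_\epsilon\psi)|^2 + c \int \chi_\epsilon^2 |\psi|^2$, and substituting the identity above allows me to absorb the $|V|$-term using $a < 1$: this yields $\int |V|\chi_\epsilon^2|\psi|^2 \le C \epsilon^{-2} \int_{B_{2\epsilon}} |\psi|^2$. Reinjecting via Cauchy--Schwarz on the cross term gives $\int_{B_\epsilon} |\nabla\psi|^2 \le C\epsilon^{-2} \int_{B_{2\epsilon}}|\psi|^2$, which is $O(\epsilon^k)$ for every $k$ by the infinite-order vanishing of $\psi$, so $\nabla\psi$ vanishes to infinite order.

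For $\Delta\psi$ in (ii), I use that $|\Delta\psi|^2 = |V|^2 |\psi|^2$ pointwise, since $\psi \in H^2_{\rm{loc}}$. The operator inequality applied to $\chi_\epsilon\psi$ gives
\begin{equation*}
\int \chi_\epsilon^2 |V|^2 |\psi|^2 \le a \int |\Delta(\chi_\epsilon\psi)|^2 + c \int \chi_\epsilon^2 |\psi|^2,
\end{equation*}
and expanding $\Delta(\chi_\epsilon\psi) = -\chi_\epsilon V\psi + 2\nabla\chi_\epsilon \cdot \nabla\psi + \psi \Delta\chi_\epsilon$ and using $|A+B+C|^2 \le (1+\eta)|A|^2 + C_\eta(|B|^2 + |C|^2)$ with $\eta>0$ chosen so that $a(1+\eta)<1$ lets me absorb the leading $|V|^2$-term. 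With $|\nabla\chi_\epsilon| \le C/\epsilon$ and $|\Delta\chi_\epsilon| \le C/\epsilon^2$, I end up with $\int_{B_\epsilon} |\Delta\psi|^2 \le C\epsilon^{-2}\int_{B_{2\epsilon}}|\nabla\psi|^2 + C\epsilon^{-4}\int_{B_{2\epsilon}}|\psi|^2$, which is $O(\epsilon^k)$ for every $k$ by the infinite-order vanishing of $\psi$ and $\nabla\psi$. The main obstacle throughout is constant bookkeeping: only $a < 1$ is assumed, so any crude Cauchy--Schwarz producing a multiplicative factor $2$ or $3$ on the $|V|$- or $|V|^2$-term would fail. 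This is why the exact identity for $\int|\nabla(\chi_\epsilon\psi)|^2$ is used in (i), and why the $(1+\eta)$-splitting with $\eta$ arbitrarily small appears in (ii).
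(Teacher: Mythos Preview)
Your argument is correct and essentially identical to the paper's: the same cutoff supported in $B_{2\epsilon}$, the same testing against $\eta^2\bar\psi$ followed by absorption via $a<1$ in (i), and the same $(1+\alpha)$-splitting of $|\Delta(\eta\psi)|^2$ with $a(1+\alpha)<1$ to absorb the $|V|^2$-term in (ii). The only cosmetic differences are that you first isolate a bound on $\int|V|\chi_\epsilon^2|\psi|^2$ and then reinject to control $\int\chi_\epsilon^2|\nabla\psi|^2$, whereas the paper bounds $\int|\eta\nabla\psi|^2$ in one pass, and you spell out the square-root operator-monotonicity reduction of the hypothesis of (ii) to that of (i) inside the lemma rather than invoking it from an earlier step as the paper does.
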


\begin{proof}
	$i$) We take $\ep \in (0,1/2]$ and define a smooth localisation function $\eta$ with support in $B_{2\ep}$, equal to $1$ in $B_{\ep}$, and such that $\abs{\na \eta} \leq c/\ep$ and $\abs{\Delta \eta} \leq c/\ep^2$.
	Multiplying the equation by $\eta^2 \ov{\p}$ and taking the real parts yields
 \begin{align*}
	 - \re \int V \abs{\eta \p}^2 & = - \re \int \ov{\p} \eta^2 \Delta \p = \re \int \na \p \cdot \na \pa{\eta^2 \ov{\p}} \\
	 = \int \abs{\eta \na \p}^2 & + \re \int \ov{\p} \na \p \cdot \na \eta^2 = \int \abs{\eta \na \p}^2 + \ud \int \na \abs{\p}^2 \cdot \na \eta^2 \\
	 = \int \abs{\eta \na \p}^2 & - \ud \int \abs{\p}^2 \Delta \eta^2.
 \end{align*}
	So by the assumption on $V$,
 \begin{align*}
	 \int \abs{\eta \na \p}^2 & \leq a \int \abs{\na \pa{\eta \p}}^2 + c \int \abs{\eta \p}^2+ \ud \int \abs{\p}^2 \Delta \eta^2 \\
	 & = a \int \abs{\eta \na \p}^2 + a \int \abs {\p \na \eta}^2 + \f{1-a}{2} \int \abs{\p}^2 \Delta \eta^2  + c \int \abs{\eta \p}^2.
 \end{align*}
	So, since $a<1$, we get
 \begin{align*}
	 \int_{B_{\ep}} \abs{\na \p}^2 \leq \int \abs{\eta \na \p}^2 \leq c_a \ep^{-2} \int_{B_{2\ep}} \abs{\p}^2 < c_a c_k 2^k \ep^{k-2},
 \end{align*}
	for any $k \geq 0$, where we used that $\p$ vanishes to infinite order. This proves the result.

	$ii$) By $i$), we know that $\na \p$ vanishes to infinite order at the origin. We take the same function $\eta$ as in $i$) and use the Schr\"odinger equation pointwise to get
 \begin{align*}
	 \int \abs{\eta \Delta \p}^2 & = \int \abs{V \eta \p}^2 \leq a \int \abs{\Delta \pa{\eta \p}}^2 + c \int \abs{\eta \p}^2 \\
	 & \leq a(1+\alpha) \int \abs{\eta \Delta \p}^2 + 2\pa{1+ \inv{\alpha}} \int \abs{\p \Delta \eta}^2 \\
	 & \hspace{0.35cm} + 4\pa{1+ \inv{\alpha}} \int \abs{\na \p \cdot \na \eta}^2 + c \int \abs{\eta \p}^2,
 \end{align*}
	for any $\alpha >0$. We take $\alpha$ such that $a(1+\alpha) < 1$ and thus 
 \begin{align*}
	 \int_{B_{\ep}} \abs{\Delta \p}^2 \leq \int \abs{\eta \Delta \p}^2 \leq c_a \ep^{-4} \int_{B_{2\ep}} \pa{\abs{\p}^2+\abs{\na \p}^2} < c_a c_k 2^k \ep^{k-4},
 \end{align*}
which proves the result.
\end{proof}

\subsection*{Step 3. Carleman estimate}
One common tool for unique continuation results is the Carleman estimate. We use here a new Carleman inequality with singular weights which we have recently proved in \cite[Corollary 1.2]{Garrigue19}. We define the weight function
 \begin{align}\label{est}
	  \phi(x) \df -\ln \ab{x} + (-\ln \ab{x})^{-1/2}.
 \end{align}
In dimension $n$, for any $\xi >0$ there exist constants $\kappa_{\xi,n}$ and $\tau_0 \ge 1$ such that for any $s \in \seg{0,1}$, any $\tau \ge \tau_0$ and any $u \in C^{\ii}_{\tx{c}}(B_{1} \backslash \acs{0},\C)$, we have \footnote{In the published version of the present article \cite{Garrigue18}, we used another Carleman estimate with the different weight $\phi(x) = -\ln \pa{ \ab{x} + \lambda \ab{x}^2}$, taken from \cite[Theorem 12]{Tataru04}. After publication, we realized that the estimate on $\Delta\pa{e^{\tau \phi} u}$ was not known for this weight. The estimate with the weight \eqref{est}, proved in \cite{Garrigue19}, is of remedies.  }
 \begin{align}\label{carl}
	 \tau^{\f{3}{2}-2s} \nor{ (-\Delta)^{(1-\xi)s} \pa{e^{\tau \phi} u}}{L^2(\R^n)} \le \kappa_{\xi,n}  \nor{e^{\tau \phi} \Delta u}{L^2(B_1)}. 
 \end{align}

\subsection*{Step 4. Proof that $\p =0$.}

We consider some number $\tau\geq 0$ (large), and we call $c$ any constant that does not depend on $\tau$. We take a smooth localisation function $\eta$, equal to $1$ in $B_{1/2} \subset \er{n}$, supported in $B_{1}$, and such that $0 \leq \eta \leq 1$. We take the weight function $\phi$ as in \eqref{carl}. It verifies $e^{\phi(x)}+ \abs{\na \phi} \leq c\abs{x}^{-1}$ and $\abs{\Delta \phi} \leq c \abs{x}^{-2}$ for $c$ sufficiently large.

In step 1, we have shown that $\p$ vanishes to infinite order at the origin and in step 2 we have deduced the same property for $\na \p$ and $\Delta \p$. Moreover, $$ \int_{B_1} \frac{\abs{\p(x)}^2}{\abs{x}^{\tau}} \d x + \int_{B_1} \frac{\abs{\na \p(x)}^2}{\abs{x}^{\tau}}\d x + \int_{B_1} \frac{\abs{\Delta \p(x)}^2}{\abs{x}^{\tau}} \d x < +\ii,$$
for all $\tau \geq 0$. All the integrals with $e^{\tau\phi}$ are finite as well and the following calculations are valid. In addition, from the Carleman inequality \eqref{carl}, we know that $e^{\tau\phi}\Psi$ belongs to $H^{3/2}\loc(\er{n})$ for all $\tau$.

By the assumption \eqref{main} on $V$, we have
\begin{equation*}
\nor{e^{\tau\phi} V \eta \p}{L^2(B_{1})} \leq \sqrt{\ep_{\delta,n}} \nor{(-\Delta )^{\frac{3}{4} - \delta} \pa{e^{\tau\phi} \eta\p}}{L^2(\er{n})} + c \nor{e^{\tau\phi} \eta\p}{L^2(B_{1})}.
\end{equation*}
Applying the Carleman estimate \eqref{carl} with $s=3/4$ and $\xi = 4\delta/3$, we get
\begin{equation*}
	\nor{(-\Delta )^{\frac{3}{4} - \delta} \pa{e^{\tau\phi}\eta \p}}{L^2(\er{n})} \leq \kappa_{4\delta/3,n} \nor{e^{\tau\phi}\Delta (\eta\p)}{L^2(B_{1})},
\end{equation*}
and hence
\begin{equation*}
	\nor{e^{\tau\phi} V \eta \p}{L^2(B_{1})} \leq \kappa_{4\delta /3,n} \sqrt{\ep_{\delta,n}}\nor{e^{\tau\phi}\Delta (\eta\p)}{L^2(B_{1})}+ c \nor{e^{\tau\phi} \eta\p}{L^2(B_{1})}.
\end{equation*}
Now we estimate
\begin{align*}
	& \nor{e^{\tau\phi}\Delta (\eta\p)}{L^2(B_{1})} \\
	& \qquad \leq  \nor{e^{\tau\phi} \eta \Delta \p}{L^2(B_{1})} + 2 \nor{e^{\tau\phi} \na \eta \cdot \na \p}{L^2(B_{1})} + \nor{e^{\tau\phi} \p \Delta \eta}{L^2(B_{1})} \\
	& \qquad \leq  \nor{e^{\tau\phi} V \eta \p}{L^2(B_{1})} + c\nor{e^{\tau\phi} \na \p}{L^2(B_{1}\backslash B_{1/2})} + c \nor{e^{\tau\phi} \p }{L^2(B_{1} \backslash B_{1/2})} \\
	& \qquad \leq  \kappa_{4\delta/3,n} \sqrt{\ep_{\delta,n}}  \nor{e^{\tau\phi} \Delta\pa{ \eta \p}}{L^2(B_{1})} + c  \nor{e^{\tau\phi} \eta \p}{L^2(B_{1})} + c e^{\tau \phi\pa{\ud}}.
\end{align*}
We take $$\ep_{\delta,n} = \f{1}{4\kappa_{4\delta/3,n}^2},$$ and move the term $\nor{e^{\tau\phi} \Delta (\eta\p)}{L^2(B_{1})}$ to the left side of the inequality, which yields
\begin{equation*}
	\nor{e^{\tau\phi}\Delta (\eta\p)}{L^2(B_{1})} \leq c \nor{e^{\tau\phi} \eta\p}{L^2(B_{1})}+ c e^{\tau\phi\pa{\ud}}.
\end{equation*}
But by the Carleman inequality \eqref{carl} applied with $s=0$, we have 
\begin{equation*}
   \nor{e^{\tau\phi} \eta\p}{L^2(B_{1})} \leq c \tau^{-\frac{3}{2}} \nor{e^{\tau\phi}\Delta (\eta\p)}{L^2(B_{1})},
\end{equation*}
so for $\tau$ big enough so that $c \tau^{-\frac{3}{2}} < 1/2$, we find
\begin{equation*}
	\nor{\eta\p}{L^2(B_{1/2})} \leq \nor{e^{\tau\pa{\phi(\cdot)-\phi\pa{\ud}}} \eta\p}{L^2(B_{1/2})} \leq c\tau^{-\frac{3}{2}}.
\end{equation*}
Eventually, letting $\tau \ra +\ii$, we get $\p = 0$ almost everywhere in $B_{1/2}$. We can then propagate this small region $B_{1/2}$, where $\p$ vanishes, to the whole space, as explained for instance in \cite{ReeSim4}.

\section{Proof of Corollary \ref{th2}}

We take $n=dN$. Let $R>0$ and $\p \in H^{3/2}(\er{dN})$. We apply the inequality \eqref{borne} to the function $x_i \mapsto \p(\dots, x_i, \dots)$ and then integrate over $x_1, \dots,x_{i-1},x_{i+1},\dots,x_N$ to get
\begin{equation*}
	\int_{B_R} \abs{v(x_i)}^2 \abs{\p}^2 \leq \ep_{\delta,d,N} \int_{\er{dN}} \abs{(-\Delta_{x_i})^{\frac{3}{4}-\delta} \p}^2 + c_{R} \int_{\er{dN}} \abs{\p}^2.
\end{equation*}
For $j \neq i$, applying \eqref{borne} with a radius $2R$, we have similarly 
\begin{equation*}
	\int_{B_R} \abs{w(x_i-x_j)}^2 \abs{\p}^2 \leq \ep_{\delta,d,N} \int_{\er{dN}} \abs{(-\Delta_{x_i})^{\frac{3}{4}-\delta} \p}^2 +c_R \int_{\er{dN}} \abs{\p}^2.
\end{equation*}
\begin{comment}
or in other words,
\begin{equation*}
\left\{
   \begin{array}{rl}
	   \abs{v(x_i)}^2 \indic_{B_R} & \leq \ep_{d,N} (-\Delta_{x_i})^{\f{3}{2}-2\delta} + c_{R}  \\
	    \abs{w(x_i-x_j)}^2 \indic_{B_R} & \leq \ep_{d,N} (-\Delta_{x_i})^{\f{3}{2}-2\delta} + c_R
  \end{array}
\right.\qquad \tx{ in $\er{dN}$}.
\end{equation*}
\end{comment}
We consider the many-body potential 
\begin{equation}\label{pot}
V(x_1, \dots ,x_N) \df \sum_{i=1}^{N} v(x_i) + \sum_{1 \leq i \sle j \leq N} w(x_i-x_j),
\end{equation}
for which
 \begin{align*}
	 \abs{V}^2\indic_{B_R} & = \indic_{B_R}\abs{\sum_{i=1}^N v(x_i) + \sum_{1 \leq i \sle j \leq N} w(x_i-x_j)}^2 \\
	 & \leq \f{N(N+1)}{2} \pa{\sum_{i=1}^N \indic_{B_R}\abs{v(x_i)}^2 + \sum_{1 \leq i \sle j \leq N} \indic_{B_R}\abs{w(x_i-x_j)}^2} \\
	 & \leq \f{N(N+1)^2}{4} \pa{ \ep_{\delta,d,N} \sum_{i=1}^N (-\Delta_{x_i})^{\f{3}{2}-2\delta} + N c_{R}} \\
	 & \leq \f{N(N+1)^2}{4} \pa{ \ep_{\delta,d,N} (-\Delta)^{\f{3}{2}-2\delta} + N c_{R}},
 \end{align*}
where in the last inequality we have used that $$\sum_{i=1}^N |k_i|^{3-4\delta}\leq \pa{\sum_{i=1}^N|k_i|^2}^{\f{3}{2}-2\delta}.$$ Thus we can take $$ \ep_{\delta,d,N} = \frac{4\ep_{\delta,dN}}{N(N+1)^2}= \f{1}{N(N+1)^2 \kappa_{4\delta/3,dN}^2},$$ and we obtain the result by applying Theorem \ref{mainthm}.

To finish, we prove that the assumption that ${v,w \in L^p_{\rm{loc}}(\er{d})}$ with $p > \max(2d/3,2)$ implies \eqref{borne}. This is very classical \cite{Kato51,LieLos01}. First let $s \in (0,d/2)$, let $v \in L^{\frac{d}{2s}}_{\rm{loc}}(\er{d})$, $R > 0$ and $u \in H^{s}(\er{d})$ supported in $B_R \subset \er{d}$. We have ${v = v \indic_{\abs{v}> M} + v \indic_{\abs{v}< M}}$, so
\begin{align*}
\int_{\er{d}} \abs{v} \abs{u}^2 & \leq \int_{\er{d}} \abs{v \indic_{\acs{\abs{v}> M} \cap B_R}} \abs{u}^2 + M \int_{\er{d}} \abs{u}^2 \\
& \leq \nor{v \indic_{\abs{v}> M}}{L^{\frac{d}{2s}}(B_R)} \nor{u}{L^{\frac{2d}{d-2s}}}^2 + M \no{u}^2 \\
& \leq c_{d,s}\nor{v \indic_{\abs{v}> M}}{L^{\frac{d}{2s}}(B_R)}  \no{(-\Delta)^{\frac{s}{2}}u}^2 + M \no{u}^2,
\end{align*}
where, in the last line, we have used the Sobolev inequality. By dominated convergence, $\nor{v \indic_{\abs{v}> M}}{L^{\frac{d}{2s}}(B_R)}$ tends to $0$ when $M \ra +\ii$. We can do a similar treatment for $w$. Therefore, this proves that for $s \in (0,d/2)$ and $q \geq 1$, if $v,w \in L^{\frac{qd}{2s}}_{\rm{loc}}(\er{d})$, then for any $R>0$ and any $\ep > 0$, there is $c_{\ep,R}$ such that
 \begin{equation*}
 \abs{v}^q \indic_{B_R} + \abs{w}^q \indic_{B_R} \leq \ep (-\Delta)^s +c_{\ep,R} \qquad \tx{ in $\er{d}$ }.
 \end{equation*}

	For the case $d \in \acs{1,2}$, we need $v,w \in L^2_{\rm{loc}}(\er{d})$ because we use $\abs{V}^2$. We have the Sobolev embedding ${H^{3/2}(\er{d}) \hookrightarrow L^{\ii}(\er{d})}$, and the argument is the same.

%\newpage

\bibliographystyle{siam}
%\bibliography{biblio}

\end{document}